\theoremstyle{plain}
\newtheorem{theorem}{Theorem}
\newtheorem{proposition}[theorem]{Proposition}
\newtheorem{corollary}[theorem]{Corollary}
\newtheorem{lemma}[theorem]{Lemma}
\newtheorem*{corollary*}{Corollary}
\newtheorem*{conjecture*}{Conjecture}
\theoremstyle{definition}
\theoremstyle{remark}
\newcommand {\SR}{{\mathbb R}}
    \newcommand{\la}{\lambda}
  \newcommand{\vp}{\varphi}
\newcommand{\be}{\begin{equation}}
\newcommand{\ee}{\end{equation}}
\newcommand{\bea}{\begin{eqnarray}}
\newcommand{\eea}{\end{eqnarray}}
\title[Some more counterexamples for Bombieri's conjecture]{Some more counterexamples for Bombieri's conjecture on univalent functions}
\author{Iason Efraimidis \and Carlos Pastor}
\date{}
\subjclass[2010]{26D05, 30C50} 
\keywords{Univalent functions, Bombieri conjecture, trigonometric inequalities}
\address{Facultad de Matem\'aticas, Pontificia Universidad Cat\'olica de Chile, Avenida Vicu\~na Mackenna 4860, Santiago, Chile.} \email{iason.efraimidis@mat.uc.cl}
\address{Instituto de Ciencias Matem\'aticas, Nicol\'as Cabrera 13-15, 28049 Madrid, Spain.} \email{carlos.pastor@icmat.es}
\begin{document}
\maketitle

\begin{abstract}
We disprove a conjecture of Bombieri regarding univalent functions in the unit disk in some previously unknown cases. The key step in the argument is showing that the global minimum of the real function $\big(n\sin{x}-\sin(nx)\big)/\big(m\sin{x}-\sin(mx)\big)$ is attained at $x = 0$ for integers $m>n\geq2$ when $m$ is odd and $n$ is even, $m$ is sufficiently big and $0.5 \leq n/m \leq 0.8194$.
\end{abstract}

\section{Introduction}

Let $S$ denote the class of analytic functions
$$
f(z) = z + a_2z^2 + a_3z^3 + \ldots + a_n z^n + \ldots \qquad \big(|z| < 1\big)
$$
which are univalent in the unit disk. A relevant member of this class is the Koebe function $K(z) = z/(1-z)^2$. Bombieri conjectured in \cite{bombieri} that one should have 
\begin{equation}\label{bombieri}
\sigma_{mn}  \, = \, B_{mn} \qquad \big(m, n \geq 2\big), 
\end{equation}
where
$$
\sigma_{mn} \, = \, \liminf_{f \rightarrow K} \frac{n - {\rm Re}\, a_n}{m - {\rm Re}\, a_m} \qquad \text{and} \qquad B_{mn} \, = \, \min_{x \in \mathbb{R}} \frac{n\sin{x} - \sin(nx)}{m\sin{x} - \sin(mx)}
$$
and the limit in $\sigma_{mn}$ is taken inside the class $S$ in the sense of uniform convergence over compact sets. It is known that 
$$
0  \, \leq \,  \sigma_{mn}  \, \leq \, B_{mn} \qquad \big(m, n \geq 2\big), 
$$
where the first inequality is a consequence of the local maximum property of the Koebe function while the second is a theorem of Prokhorov and Roth \cite{prokhorov_roth}. It is easy to see that $B_{mn}=0$ when $m$ is even and $n$ is odd and, therefore, that the conjecture is true in this case since $\sigma_{mn}  = B_{mn}=0$. This conjecture has also been verified by Bshouty and Hengartner \cite{BH} for analytic variations of the Koebe function and for functions with real coefficients (a simpler proof of the latter was given in \cite{prokhorov_roth}). However, the number of known counterexamples in the remaining cases has been steadily increasing. 

Recently Leung \cite{leung} devised a variational method (using Loewner's theory) to construct a uniparametric family of functions $f_\varepsilon$ in the class $S$ converging to the Koebe function as $\varepsilon \rightarrow 0^+$ and satisfying
$$
 \sigma_{mn}  \, \leq \, \lim_{\varepsilon \rightarrow 0^+} \frac{n - {\rm Re}\, a_n}{m - {\rm Re}\, a_m} \, < \, \frac{n^3-n}{m^3-m} \qquad \big(m>n\geq 2\big).
$$
This, therefore, yields a counterexample to \eqref{bombieri} as long as the function
$$
f(x) = \frac{n\sin{x}-\sin(nx)}{m\sin{x}-\sin(mx)}, \qquad \text{for which} \quad f(0)=\frac{n^3-n}{m^3-m},  
$$
satisfies
\begin{equation}\label{works}
\min_{x\in\mathbb{R}} f(x) = f(0).
\end{equation}
Condition \eqref{works} was verified by the first author in \cite{efraimidis} for $m > n \geq 2$ when $m$ and $n$ have the same parity and in the case when $m$ is odd, $n$ is even and $n \leq (m+1)/2$, thus disproving Bombieri's conjecture for all these pairs of integers $(m, n)$.  

In this article we prove the following theorem. 
\begin{theorem} \label{thm}
Let $m$ and $n$ be integers such that $m$ is odd, $n$ is even, $0.5 \leq n/m \leq 0.8194$ and $m \geq 81$. Then condition \eqref{works} is satisfied. In particular, Bombieri's conjecture \eqref{bombieri} fails for all these pairs of integers. 
\end{theorem}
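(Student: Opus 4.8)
The plan is to show that the function
$$
f(x) = \frac{n\sin x - \sin(nx)}{m\sin x - \sin(mx)}
$$
attains its global minimum over $\mathbb{R}$ at $x=0$, where $f(0)=(n^3-n)/(m^3-m)$. By periodicity and evenness it suffices to work on $[0,\pi]$. First I would dispose of a neighbourhood of $\pi$ and any zeros of the denominator $D(x)=m\sin x-\sin(mx)$: since $m$ is odd, $D(\pi)=0$, but the numerator $N(x)=n\sin x - \sin(nx)$ vanishes to order $3$ at $0$ while $D$ does too, so the ratio is bounded near $0$; near other zeros of $D$ one checks that $N$ has the same sign as $D$ locally (both being small perturbations of $\sin$), so $f$ stays positive. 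The meat of the argument is an interval away from these singular points where one must prove $f(x) \geq f(0)$, equivalently
$$
(m^3-m)\,\big(n\sin x - \sin(nx)\big) \;\geq\; (n^3-n)\,\big(m\sin x - \sin(mx)\big)
$$
on the portion of $[0,\pi]$ where $D(x)>0$, and a matching reversed inequality where $D(x)<0$ (so that the quotient is actually $\geq f(0)>0$ there as well — one should verify $f>0$ throughout, then the comparison reduces to the sign-consistent inequality $(m^3-m)N(x)D(x) \geq (n^3-n)D(x)^2$, or better, to controlling $N/D$ directly).

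The strategy I would adopt follows the approach already used in \cite{efraimidis}: reduce to a \emph{trigonometric inequality} that is uniform in $m$ and $n$ once $t=n/m$ is treated as a continuous parameter in $[0.5, 0.8194]$ and $m$ is large. Concretely, after dividing by $\sin x$ (valid away from multiples of $\pi$) and setting $g_k(x) = k - \sin(kx)/\sin x$, one wants
$$
\frac{g_n(x)}{g_m(x)} \;\geq\; \frac{n^3-n}{m^3-m}
$$
wherever $g_m(x)>0$. Writing $n=tm$, the quantities $\sin(nx)$ and $\sin(mx)$ oscillate rapidly, so the natural move is to bound $|\sin(nx)|, |\sin(mx)| \leq 1$ in the ``bulk'' region where $|\sin x|$ is not too small, reducing matters to an elementary inequality in $x$, $t$, $m$; and then to treat the region where $x$ is close to $0$ (where the cubic behaviour dominates and one expands in Taylor series) and close to $\pi$ (where $\sin x$ is again small, but now $mx \approx m\pi$ is an integer multiple of $\pi$ up to sign, so $\sin(mx)$ is small too) separately with more care. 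The constant $0.8194$ surely arises as the threshold below which a certain explicit one-variable function stays of the correct sign — I would identify that function, likely something like $t^3$ versus an expression involving $\sin$ or a rational bound coming from the $x\to 0$ expansion combined with the worst case $|\sin(mx)|=1$ — and pin down that $0.8194$ is (a safe rational truncation of) its root, while $m\geq 81$ is the threshold making the error terms $O(1/m)$ or $O(1/m^2)$ small enough to be absorbed.

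In more detail, I expect the proof to split $[0,\pi]$ into three or four ranges. (i) A small interval $[0, c/m]$ or $[0,c]$ near the origin: here use that $N(x)/x^3 \to n^3/6$-type estimates — more precisely, since $k\sin x - \sin(kx) = \frac{k^3-k}{6}x^3 + O(x^5)$ with explicitly controlled remainders, the ratio $f(x)$ is $f(0) + O(x^2)$ with a nonnegative leading correction, so $f(x) \geq f(0)$ there; the condition $m\geq 81$ and $n/m \leq 0.8194$ would enter in signing the $x^2$ coefficient. (ii) A ``bulk'' range bounded away from $0$ and $\pi$: replace $\sin(nx)$ and $\sin(mx)$ by $\pm 1$ in the worst direction, clear denominators, and verify a clean inequality like
$$
(m^3-m)\big(n\sin x - 1\big) \;\geq\; (n^3-n)\big(m\sin x + 1\big),
$$
valid once $\sin x$ exceeds a small explicit constant; this is where $0.5 \leq n/m$ helps (the left side is large because $n$ is comparable to $m$) and where $n/m \leq 0.8194$ is the binding constraint. (iii) A range near $\pi$: substitute $x = \pi - y$, use $\sin(mx) = \sin(my)$ (since $m$ odd) and $\sin(nx) = -\sin(ny)$ (since $n$ even), so $N(\pi - y) = n\sin y + \sin(ny)$ and $D(\pi - y) = m\sin y - \sin(my)$ — note $D$ can vanish, so one must track its sign and possibly use $|\sin(my)| \leq m|\sin y|$-type bounds plus the cubic expansion as in (i). The main obstacle I anticipate is the transition zones between (i)–(ii) and (ii)–(iii): making the Taylor-remainder bounds in (i) and the $\pm1$ relaxation in (ii) overlap on a common subinterval where both are valid, with all constants explicit and compatible with $m\geq 81$ and the exact cutoff $0.8194$. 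This bookkeeping — rather than any single hard inequality — is what makes the theorem nontrivial, and I would expect the authors to have automated or very carefully hand-tuned the interval endpoints so that the pieces fit together. The parity hypotheses ($m$ odd, $n$ even) are used precisely at $x=\pi$ to keep $N$ from changing sign there; without them the case $m$ even, $n$ odd already gives $B_{mn}=0 \neq f(0)$, consistent with the stated scope.
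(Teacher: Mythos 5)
Your outline follows the paper's decomposition exactly — split $[0,\pi]$ into a neighborhood of $0$, a bulk region, and a neighborhood of $\pi$; Taylor-expand near $0$; bound $\sin(mx),\sin(nx)$ by $\pm 1$ in the bulk; exploit the parity of $m,n$ near $\pi$ — and your bulk substitution even lands on the same threshold the paper obtains, $\sin x \geq \dfrac{m^2-mn+n^2-1}{mn(m-n)}$. What you defer as ``bookkeeping'' is, however, where the theorem actually lives, and one of your guesses about it is wrong.

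First, the paper does not analyze $f$ directly but $g(x)=mf(x)-n=\dfrac{n\sin(mx)-m\sin(nx)}{m\sin x-\sin(mx)}$; this normalization is chosen because $g(0)=-n(m^2-n^2)/(m^2-1)\sim-\lambda(1-\lambda^2)m$ diverges while the numerator of $g$ carries coefficients of size $m$ rather than $m^3$, which is what makes the Taylor remainders tractable across the entire interval $[0,5.78/m]$. Near $0$ a degree-$3$ or $5$ expansion, as you sketch, will not reach $mx=5.78$; the paper sandwiches $\sin t$ between two degree-$9$ polynomials on $[0,5.78]$, where the lower one uses a deliberately perturbed top coefficient ($1/482800$ in place of $1/9!$) so that it stays below $\sin$ on the whole interval — a device you would still need to discover. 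Second, your guess that $0.8194$ comes from signing the quadratic coefficient near $0$ is incorrect: the near-$0$ and bulk steps both go through with the looser $\lambda\le 0.82$. The constant $0.8194$ is forced by the near-$\pi$ step on $[2.8/m,5.78/m]$, where after setting $t=mx$ and $m=81$ one must verify an explicit two-variable function $F(\lambda,t)\ge 0$; at $\lambda=0.8194$ its minimum is only about $2\times 10^{-4}$. For that region the correct Taylor base point is $3\pi/2$, not $0$ or $\pi$, since $t$ sweeps $[2.8,5.78]$ which straddles $3\pi/2$: the paper integrates $\sin\ge -1$ from $3\pi/2$ to get quartic upper and lower bounds for $\sin(mx)$. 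Without the $g$-normalization, the degree-$9$ sandwich near $0$, the $3\pi/2$-centered sandwich near $\pi$, and the razor-thin numerical check of $F(0.8194,t)\ge 0$, the plan does not close.
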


In \cite{efraimidis} it was conjectured that \eqref{works} should hold for all $m > n \geq 2$ such that $m$ is odd, $n$ is even and $n < (4m+2)/5$. Note that Theorem~\ref{thm} not only proves this conjecture for $m \geq 81$, leaving only finitely many cases open, but also shows that the slope $4/5$ is not optimal. Determining the critical slope
$$
\inf\big\{n/m \; : \; m > n \geq 2,\; m \text{ odd},\; n \text{ even},\; \eqref{works} \text{ fails}\big\}
$$
is still an open problem.

To prove Theorem \ref{thm} we employ simple real variable techniques, often involving Taylor series. Computer-assisted graphs\footnote{These graphs were made using the online applet www.desmos.com/calculator.} suggest that for $n/m \geq 0.82$ condition \eqref{works} is no longer satisfied. A variation of the arguments presented in this article should suffice to show that this is indeed the case.

For the rest of the article we shall assume that $m> n \geq 2$ are integers, $m$ being odd and $n$ even, and that $\lambda = n/m$ lies in the interval $[0.5, 0.82]$. Instead of studying the function $f(x)$ directly, we will consider
$$
g(x) \, = \, mf(x) - n \, = \, \frac{n\sin(mx) - m\sin(nx)}{m\sin{x} - \sin(mx)}.
$$
This function has two advantages over the former $f$: firstly it oscillates around $0$, and secondly its value at the origin
$$
g(0) = -\frac{n(m^2-n^2)}{m^2-1}
$$
behaves asymptotically like $-\lambda(1-\lambda^2)m$, thus diverging. This makes the coefficients involved in the expression defining $g$ comparatively smaller than in the case of $f$ where $f(0)$ tends to $\la^3$, allowing for better approximations.

The proof of \eqref{works} is divided in three parts: in section \ref{s_far} we show that $g$ cannot attain any value below $g(0)$ except, possibly, in a neighborhood of either $0$ or $\pi$. These neighborhoods are then studied more closely in sections~\ref{s_0} and \ref{s_pi} to show that the minimum is indeed attained at $0$. An appendix is devoted to some tedious calculations.

\section{Away from $0$ and $\pi$}\label{s_far}  

One can readily show that $g$ cannot attain its global minimum away from $0$ and $\pi$ by exploiting the simple estimate
$$
|g(x)| \, \leq \, \frac{m+n}{\big|m|\sin{x}|-1\big|}.
$$
We will prove the following sharper version of the lower bound given by this inequality:  

\begin{proposition}\label{prop_far}
The function $g$ satisfies
$$
g(x) \, \geq \, -\frac{m+n}{m|\sin{x}|+1} \qquad \big(x \in \mathbb{R}\big).
$$
\end{proposition}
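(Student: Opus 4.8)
The plan is to reduce the claimed bound to one elementary inequality about sines. Since $g$ is even and $2\pi$-periodic, and the right-hand side has the same symmetries, it suffices to treat $x\in[0,\pi]$, where $\sin x=|\sin x|\ge 0$. Write $D_m(x)=m\sin x-\sin(mx)$ and $D_n(x)=n\sin x-\sin(nx)$: then the denominator of $g$ is exactly $D_m$, and a one-line expansion gives that its numerator equals $mD_n-nD_m$, so $g=(mD_n-nD_m)/D_m$. Putting the claimed inequality over a common denominator then yields the identity
\begin{equation*}
g(x)+\frac{m+n}{m|\sin x|+1}\;=\;\frac{m\,Q(x)}{D_m(x)\,\big(m|\sin x|+1\big)},\qquad Q(x)=D_n(x)\big(1+m\sin x\big)+D_m(x)\big(1-n\sin x\big),
\end{equation*}
valid for $x\in(0,\pi)$. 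Hence, once the denominator on the right is known to be positive, everything reduces to showing $Q(x)\ge 0$ on $(0,\pi)$.

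The positivity of that denominator follows from the inequality $|\sin(k\theta)|<k\sin\theta$ for every integer $k\ge 2$ and every $\theta\in(0,\pi)$, which I would obtain by a short induction on $k$ from the addition formula (the triangle-inequality step is strict because $\theta\notin\pi\mathbb{Z}$). Taking $k=m$ and $k=n$ gives $D_m(x)>0$ and $D_n(x)>0$ for all $x\in(0,\pi)$; in particular $g$ is well defined there and $D_m(x)\big(m|\sin x|+1\big)>0$.

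It remains to bound $Q$, which I would do by splitting on the size of $\sin x$. If $0<\sin x\le 1/n$ then $1-n\sin x\ge 0$, and since $D_m(x)$, $D_n(x)$ and $1+m\sin x$ are all positive, $Q(x)>0$ immediately. If instead $\sin x\ge 1/n$ then $1-n\sin x\le 0$, so using $D_m(x)\le m\sin x+1$ (from $\sin(mx)\ge -1$) and $D_n(x)\ge n\sin x-1$ (from $\sin(nx)\le 1$) we get
\begin{equation*}
Q(x)\;\ge\;\big(m\sin x+1\big)\big(1-n\sin x\big)+\big(n\sin x-1\big)\big(1+m\sin x\big)\;=\;0,
\end{equation*}
the two products being opposite. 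This establishes the proposition on $(0,\pi)$. At $x=0$ the inequality reads $-n(m^2-n^2)/(m^2-1)\ge -(m+n)$, i.e. $n(m-n)\le m^2-1$, which holds since $n(m-n)\le m^2/4<m^2-1$; and at $x=\pi$ (and its odd multiples) one checks from the local expansion that $g$ blows up to $+\infty$, so the bound is vacuous there.

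I do not expect a genuine obstacle here: the identity in the first paragraph is the heart of the matter, and the rest is a sign analysis. The one place deserving care is the behaviour near $x=\pi$, where $g$ has a pole — one should record that this pole is to $+\infty$, so the stated lower bound survives, and that the odd multiples of $\pi$ are the only points of $\mathbb{R}$ at which $g$ fails to be finite. Note also that neither the parities of $m,n$ nor the range of $\lambda$ are used in this proposition; only $m>n\ge 2$ is needed.
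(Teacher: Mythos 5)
Your proof is correct and essentially the same as the paper's: after clearing denominators, both arguments split on whether $\sin x\le 1/n$ or $\sin x\ge 1/n$, using $n\sin x-\sin(nx)\ge0$, $m\sin x-\sin(mx)\ge0$ in the first case and $\sin(nx)\le1$, $\sin(mx)\ge-1$ in the second. Your $Q(x)=D_n(x)(1+m\sin x)+D_m(x)(1-n\sin x)$ is exactly $\sin x$ times the expression the paper works with, so the two presentations differ only cosmetically; you also spell out the endpoint check at $x=0$ (that $n(m-n)\le m^2-1$) which the paper leaves implicit.
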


As an immediate consequence we deduce that $g$ cannot attain its global minimum in the region determined by $-(m+n)/(m|\sin{x}|+1) > g(0)$, or equivalently
\begin{equation}\label{far}
|\sin{x}| \, > \, \frac{m^2 - mn + n^2 - 1}{mn(m-n)}.
\end{equation} 
This region can be further simplified to obtain the following weaker but more explicit result:

\begin{corollary}
The function $g$ satisfies $g(x) \geq g(0)$ in the interval $[5.78/m, \pi - 5.78/m]$ for all $\, m \geq 81$.
\end{corollary}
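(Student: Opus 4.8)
The plan is to combine Proposition~\ref{prop_far} with a uniform lower bound for $|\sin x|$ on the interval $[5.78/m,\pi-5.78/m]$. Rerunning the computation that produced \eqref{far} but with non-strict inequalities, Proposition~\ref{prop_far} gives $g(x)\geq -\frac{m+n}{m|\sin x|+1}\geq g(0)$ as soon as
\[
|\sin x|\;\geq\;\frac{m^{2}-mn+n^{2}-1}{mn(m-n)}.
\]
On $[5.78/m,\pi-5.78/m]$ — a genuine interval since $5.78/m\leq 5.78/81<\pi/2$ for $m\geq 81$ — the sine is positive, increases on $[5.78/m,\pi/2]$ and is symmetric about $\pi/2$, so its minimum over that interval equals $\sin(5.78/m)$. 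Thus the corollary follows once we show
\[
\sin(5.78/m)\;\geq\;\frac{m^{2}-mn+n^{2}-1}{mn(m-n)}\qquad\big(m\geq 81,\ \tfrac{n}{m}\in[0.5,0.82]\big).
\]

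Writing $\lambda=n/m$ and discarding the harmless $-1$ in the numerator, it suffices to prove $m\sin(5.78/m)\geq\frac{1-\lambda+\lambda^{2}}{\lambda(1-\lambda)}$. The right-hand side has $\lambda$-derivative $\frac{2\lambda-1}{\lambda^{2}(1-\lambda)^{2}}\geq 0$ on $[\tfrac12,1)$, hence on $[0.5,0.82]$ it is at most its value $0.8524/0.1476$ at $\lambda=0.82$. For the left-hand side, $t\mapsto\sin(t)/t$ is decreasing on $(0,\pi)$ (its numerator derivative $t\cos t-\sin t$ is negative there), so $m\sin(5.78/m)=5.78\cdot\frac{\sin(5.78/m)}{5.78/m}$ increases with $m$ and is therefore at least $81\sin(5.78/81)$, which by the elementary bound $\sin t>t-t^{3}/6$ exceeds $5.78-\frac{5.78^{3}}{6\cdot 81^{2}}$. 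Everything thus reduces to the single numerical inequality
\[
5.78-\frac{5.78^{3}}{6\cdot 81^{2}}\;>\;\frac{0.8524}{0.1476},
\]
both sides being close to $5.77507$.

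I expect the main obstacle to be nothing conceptual but rather the tightness of this last inequality: the two sides differ by only about $3\times 10^{-5}$, so the estimates for $\sin(5.78/81)$ and for $\sup_{\lambda\in[0.5,0.82]}\tfrac{1-\lambda+\lambda^{2}}{\lambda(1-\lambda)}$ have to be carried to enough digits, and the bound $\sin t>t-t^{3}/6$ is only just strong enough. If more room were ever needed, the term $-1/\big(m^{3}\lambda(1-\lambda)\big)$ dropped above contributes a further $\sim 10^{-3}$ at $m=81$, comfortably closing the gap. In other words this corollary is just the explicit, mildly lossy restatement of Proposition~\ref{prop_far} promised in the text, with $5.78$ chosen essentially as the smallest round constant for which the computation goes through — a smaller constant being desirable because it enlarges the ``easy'' region and so shortens the delicate analysis near $0$ and $\pi$ carried out in the later sections.
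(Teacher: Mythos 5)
Your proof is correct and follows essentially the same path as the paper: reduce by symmetry and monotonicity of $\sin$ to the endpoint $x=5.78/m$, bound $\sin$ below by $t-t^3/6$, observe the right-hand side $\tfrac{1-\lambda+\lambda^2}{\lambda(1-\lambda)}$ is increasing on $[0.5,0.82]$, and check the resulting tight numerical inequality at $m=81$, $\lambda=0.82$. The only cosmetic difference is that you make explicit the monotonicity of $m\sin(5.78/m)$ in $m$, where the paper simply evaluates the cubic lower bound at $m=81$; both reduce to the identical borderline inequality $5.78-\tfrac{5.78^3}{6\cdot 81^2}\geq\tfrac{0.8524}{0.1476}$.
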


\begin{proof}
By the symmetry of \eqref{far} it suffices to prove that the inequality is satisfied for $x = 5.78/m$. Furthermore, since $\sin{x} \geq x - x^3/6$ for $x > 0$, it will be enough to show that 
$$
\frac{5.78}{m} - \frac{5.78^3}{6m^3} \, \geq \, \frac{m^2 - mn + n^2}{nm(m-n)} = \frac{1-\lambda+\lambda^2}{m\lambda(1-\lambda)}.
$$
Recall that $0.5\leq \lambda \leq 0.82$ and note that the function on the right-hand side is increasing in $\lambda$. Hence, it suffices to prove that this inequality holds for $\lambda = 0.82$. An easy computation shows that this is true when $m \geq 81$. 
\end{proof}

\begin{proof}[Proof of Proposition~\ref{prop_far}]
We may restrict ourselves to $x \in [0, \pi]$. Note that $g(0)\geq - (m+n)$ and that $g(\pi)=+\infty$. For $x\in (0,\pi)$ we rewrite the desired inequality as
\begin{equation}\label{lower2}
m\sin(nx)-n\sin(mx) + \frac{\sin(nx)}{\sin{x}} + \frac{\sin(mx)}{\sin{x}} \, \leq \, m+n.
\end{equation}

We follow different arguments depending on whether $\sin{x} \leq 1/n$ or $1/n \leq \sin{x} \leq 1$. In the first case we rewrite the left-hand side of \eqref{lower2} as
$$
 m +n - \left(m+\frac{1}{\sin{x}}\right) \big(n\sin{x} - \sin(nx)\big) - \left(\frac{1}{\sin{x}}-n\right)\big(m\sin{x}-\sin(mx)\big).
$$
The inequality now follows by noting that $n \sin{x} - \sin(nx) \geq 0$ and $m \sin{x} - \sin(mx) \geq 0$. When $\sin{x} \geq 1/n$ we rewrite the left-hand side of \eqref{lower2} as
$$
\left(m+\frac{1}{\sin{x}}\right) \sin(nx) - \left(n - \frac{1}{\sin{x}}\right)\sin(mx),
$$
and note that its value increases if we replace $\sin(nx)$ by $1$ and $\sin(mx)$ by $-1$.
\end{proof}

\section{In a neighborhood of $0$}\label{s_0}

In this section we approximate the function $g$ in a neighborhood of $0$ with the objective of proving:
\begin{proposition}\label{prop_0}
The function $g$ satisfies $g(x) \geq g(0)$ in the interval $[0, 5.78/m]$ for all $\, m \geq 81$.
\end{proposition}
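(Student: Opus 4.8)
The plan is to reduce the inequality to a statement in the rescaled variable $t=mx$ and then combine a monotonicity observation with Taylor estimates. Since the claimed inequality is trivial at $x=0$, fix $x\in(0,5.78/m]$ and write $g(x)=N(x)/D(x)$ with $N(x)=n\sin(mx)-m\sin(nx)$ and $D(x)=m\sin x-\sin(mx)$. Because $mx\le 5.78<2\pi$, one checks that $D(x)>0$ throughout this range: when $0<mx\le\pi$, the concavity of $\sin$ on $[0,\pi]$ applied to the point $x=\tfrac1m\cdot mx$ gives $m\sin x\ge\sin(mx)$, while if $\pi<mx\le 5.78<2\pi$ then $\sin(mx)<0<m\sin x$. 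Multiplying by $D(x)>0$, the inequality $g(x)\ge g(0)$ becomes $P(x)\ge 0$, where (note $\tfrac{n(m^2-n^2)}{m^2-1}=-g(0)$)
$$
P(x)\;=\;N(x)+\frac{n(m^2-n^2)}{m^2-1}\,D(x).
$$

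Next I would substitute $x=t/m$ with $t\in(0,5.78]$ and put $\lambda=n/m\in[0.5,0.82]$. Writing $R_m(t)=t-m\sin(t/m)$, a short rearrangement gives
$$
\frac1m\,P(t/m)\;=\;Q(t)-\lambda(1-\lambda^2)\,\mathcal{E}_m(t),
$$
with $Q(t)=\lambda^3\sin t+\lambda(1-\lambda^2)t-\sin(\lambda t)$ and $\mathcal{E}_m(t)=\big(m^2R_m(t)-(t-\sin t)\big)/(m^2-1)$. Expanding the numerator of $\mathcal{E}_m$ as $\sum_{k\ge 2}(-1)^k\frac{t^{2k+1}}{(2k+1)!}\bigl(1-m^{-2(k-1)}\bigr)$ and noting that its terms decrease for $t\le 5.78$, it is an alternating series, so $0\le\mathcal{E}_m(t)\le\tfrac{t^5}{120(m^2-1)}$. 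It therefore suffices to prove
$$
Q(t)\;\ge\;\frac{\lambda(1-\lambda^2)\,t^5}{120(m^2-1)}\qquad\bigl(0<t\le 5.78\bigr).
$$

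The structural input is that $Q$ is non-decreasing on $[0,5.78]$ with $Q(0)=0$: a computation gives $Q'(t)=2\lambda\bigl(\sin(\lambda t/2)-\lambda\sin(t/2)\bigr)\bigl(\sin(\lambda t/2)+\lambda\sin(t/2)\bigr)$, and since $t/2\le 2.89<\pi$ both factors are non-negative, the first again by the concavity of $\sin$ on $[0,\pi]$. I would then split the interval at, say, $t=5$. For $0<t\le 5$ I would use the expansion $Q(t)=\lambda^3\sum_{j\ge 2}(-1)^j\frac{1-\lambda^{2j-2}}{(2j+1)!}t^{2j+1}$, whose tail from $j=3$ onward is alternating with decreasing terms when $t\le 5.78$; this yields $Q(t)\ge\tfrac{\lambda^3(1-\lambda^2)t^5}{120}\bigl(1-\tfrac{(1+\lambda^2)t^2}{42}\bigr)$, so the desired bound reduces to $\lambda^2\bigl(1-\tfrac{(1+\lambda^2)t^2}{42}\bigr)\ge\tfrac1{m^2-1}$, an elementary inequality for $m\ge 81$, $\lambda\in[0.5,0.82]$ and $t\le 5$ (the worst case, $\lambda^2\approx 0.67$ and $t=5$, still leaves the left side above $0.003$). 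For $5\le t\le 5.78$ I would instead use monotonicity, $Q(t)\ge Q(5)$, and bound $Q(5)$ from below directly, using that $\sin$ is monotone on the relevant arcs, to get $Q(5)\ge 0.21$, which comfortably exceeds the right-hand side $\tfrac{\lambda(1-\lambda^2)\cdot 5.78^5}{120(m^2-1)}<0.004$.

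The step I expect to be the main obstacle is the last, quantitative one. Near the origin both $Q(t)$ and the error term $\lambda(1-\lambda^2)\mathcal{E}_m(t)$ vanish to order $t^5$, so one cannot bound $Q$ below by a constant there and must extract the correct leading coefficient; and the clean two-term Taylor lower bound for $Q$ degrades precisely as $t$ approaches $5.78$ with $\lambda$ near $0.82$, forcing one to patch that regime with the monotonicity of $Q$ together with a careful — though still elementary — estimate of $Q$ at the splitting point, uniform in $\lambda$.
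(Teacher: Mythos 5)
Your proof is correct, and it takes a genuinely different route from the paper's. The paper proceeds by sandwiching $\sin$ between two explicit degree-$9$ polynomials $s_1,s_2$ (with a specially tuned coefficient $x^9/482800$ in $s_1$), proves $s_1\le\sin\le s_2$ on $[0,5.78]$ and $ms_1(x)-s_2(mx)\ge 0$, and then reduces \eqref{cond3} to a polynomial inequality in $y=m^2x^2$ and $\la$ which is verified by a chain of monotonicity and endpoint evaluations. You instead clear the denominator outright, substitute $t=mx$, and split the resulting expression into the $m$-independent main term $Q(t)=\la^3\sin t+\la(1-\la^2)t-\sin(\la t)$ and a genuine $O(m^{-2})$ error $\la(1-\la^2)\mathcal E_m(t)$; the key structural ingredient is the factorization $Q'(t)=2\la\bigl(\sin(\la t/2)-\la\sin(t/2)\bigr)\bigl(\sin(\la t/2)+\la\sin(t/2)\bigr)$, which gives monotonicity of $Q$ for free via the concavity of $\sin$ on $[0,\pi]$. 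This makes the dependence on $m$ transparent and replaces the paper's magic polynomials by a clean closed-form identity. I checked the arithmetic at the two delicate points: the Leibniz-type bound on $\mathcal E_m$ holds since $5.78^2/42\approx 0.795$ and the $m$-correction factor is $1+m^{-2}+\cdots<1.001$ for $m\ge 81$; and your two-term Taylor lower bound for $Q$ is valid on $[0,5]$ because the ratio of the $j=3$ to $j=2$ terms is $\tfrac{t^2(1+\la^2)}{42}\le\tfrac{25\cdot 1.6724}{42}\approx 0.9955<1$ (this is exactly why you must split at $t=5$ rather than $5.78$ — past roughly $t\approx 5.01$ the ratio exceeds $1$ for $\la$ near $0.82$, and you handle that regime with monotonicity instead). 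The reduced inequality $\la^2\bigl(1-\tfrac{(1+\la^2)t^2}{42}\bigr)\ge 1/(m^2-1)$ has its minimum over the admissible range at $(\la,t)=(0.82,5)$, where the left side is about $3.0\cdot 10^{-3}$ versus $1/6560\approx 1.5\cdot 10^{-4}$ on the right, so the argument goes through with ample room. Your proof trades the paper's bespoke polynomial approximants for a more conceptual decomposition, at the cost of the somewhat delicate choice of the splitting point $t=5$.
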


Our strategy will be the following: let $s_1$ and $s_2$ be two functions satisfying 
\begin{equation}\label{cond1}
s_1(t) \leq \sin{t} \leq s_2(t) \qquad \big(t \in [0, 5.78]\big)
\end{equation}
and
\begin{equation}\label{cond2}
ms_1(x) - s_2(mx) \geq 0 \qquad \big(x \in [0, 5.78/m], \; m \geq 81\big).
\end{equation}
Under these circumstances, for $x \in [0, 5.78/m]$ and $m \geq 81$, we have that either $g(x) \geq 0$ or 
$$g(x) \geq \frac{ns_1(mx)-ms_2(nx)}{ms_1(x)-s_2(mx)}.$$
Proposition~\ref{prop_0} will then follow if for an appropriate choice of $s_1$ and $s_2$ we can show that
\begin{equation}\label{cond3}
\frac{ns_1(mx)-ms_2(nx)}{ms_1(x)-s_2(mx)} \geq g(0) \qquad \big(x \in [0, 5.78/m], \; m \geq 81\big).
\end{equation}

\begin{proof}[Proof of Proposition~\ref{prop_0}]
We consider
\begin{align*}
s_1(x) &= x-\frac{x^3}{3!}+\frac{x^5}{5!}-\frac{x^7}{7!}+ \frac{x^9}{482800} \\
s_2(x) &= x-\frac{x^3}{3!}+\frac{x^5}{5!}-\frac{x^7}{7!}+ \frac{x^9}{9!}
\end{align*}
for which we will now check that conditions (\ref{cond1}-\ref{cond3}) hold. We begin with condition \eqref{cond1} and note that integrating the inequality $\cos{x} \leq 1$ nine times from $0$ to $x$, we readily obtain $\sin{x} \leq s_2(x)$ for $x \geq 0$. 

For the other inequality in \eqref{cond1} we will prove that the first eight derivatives of $\vp(x) = \sin{x} - s_1(x)$, including $\vp$ itself, are positive on some initial interval and vanish at most at a single point for $x > 0$. Note that this claim reduces the second inequality in \eqref{cond1} to checking that $\vp(5.78)\approx 0.0104 >0$.

We show first that the claim holds for the eighth derivative. To do this we write 
$$
\psi(x) = \vp^{(8)}(x) = \sin x - ax, \qquad \text{where} \quad  a= \frac{9!}{482800} \approx  0.7516,  
$$
and note that $\psi(0) =0$, $\psi'(0) >0$ and $\psi(x)<0$ for $x>1/a\approx 1.3305$. Now for $x>0$ the first two roots of $\psi'$ are $x_1= \arccos(a)\approx 0.7203$ and $x_2=2\pi-\arccos(a)\approx 5.5629$. Therefore $\psi$ is positive in $(0,x_1]$, decreases and has a single root in $(x_1,x_2)$ and is strictly negative in $[x_2,+\infty)$. The claim for the remaining derivatives is now proved by backwards induction, noting that $\vp^{(k)}(0) = 0$ for $0 \leq k \leq 7$ and that the claim for $\vp^{(k+1)}$ implies $\vp^{(k)}$ is increasing in a neighborhood of zero and has at most one critical point.

To verify \eqref{cond2} we rewrite it, after dividing by $m^3x^3$, as
$$
\frac{1}{3!}-\frac{m^2x^2}{5!}+\frac{m^4x^4}{7!}-\frac{m^6x^6}{9!} - \frac{1}{3!m^2} + \frac{x^2}{5!m^2} - \frac{x^4}{7!m^2} + \frac{x^6}{482800m^2} \geq 0. 
$$
The change of variables $y=m^2x^2\in [0, 5.78^2]$ shows that the above is equivalent to $p(y)-q(y) \geq 0$, where
$$
p(y) = \frac{1}{3!}-\frac{y}{5!}+\frac{y^2}{7!}-\frac{y^3}{9!} \qquad \text{and} \qquad q(y) = \frac{1}{3!m^2} - \frac{y}{5!m^4} + \frac{y^2}{7!m^6} - \frac{y^3}{482800m^8}. 
$$
We compute 
$$
p'(y) \, = \, -\frac{1}{7!} \left(  \frac{y^2}{24} -2y +42\right)  \, < \, 0 \qquad (y \in\SR)
$$
and, since $p$ decreases, we deduce that 
$$
p(y) \,\geq\, p(5.78^2) \,\approx \, 6.9607\cdot 10^{-3}\qquad \big(y \in [0, 5.78^2]\big). 
$$
Also, it is easy to check that
$$
q'(y) \, = \, -\frac{1}{7!m^6} \left(  \frac{3\cdot 7!}{482800 m^2} \, y^2 -2y +42m^2\right)  \, < \, 0 \qquad (y \in\SR), 
$$
which shows that $q$ decreases and therefore $q(y)\leq q(0)$ for $y\geq 0$. Hence
$$
p(y)-q(y) \, > \, 6.96 \cdot 10^{-3} - \frac{1}{3! m^2} \qquad \big(y \in [0, 5.78^2]\big), 
$$
which can easily be seen to be positive for $m\geq 5$. This proves condition \eqref{cond2}. 

Finally, in order to verify condition \eqref{cond3}, we multiply by its denominator, divide by $mnx^3$ and rewrite it as 
\begin{align*}
\frac{n^2}{3!} -\frac{n^4x^2}{5!} +\frac{n^6x^4}{7!} -\frac{n^8x^6}{9!} -\frac{m^2}{3!} +\frac{m^4 x^2}{5!} -\frac{m^6 x^4}{7!} +\frac{m^8 x^6}{482800} \geq -\frac{m^2(m^2-n^2)}{m^2-1}\times &\\
\times \left( \frac{1}{3!} -\frac{m^2x^2}{5!}+\frac{m^4x^4}{7!}-\frac{m^6x^6}{9!}  -\frac{1}{3!m^2} + \frac{x^2}{5!m^2} - \frac{x^4}{7!m^2} +\frac{x^6}{482800m^2} \right).  &
\end{align*}
We cancel out the constant terms, divide by $(m^2-n^2)x^2$ and regroup to get
\begin{align*}
\left[ \frac{1}{482800} \left( \frac{m^8}{m^2-n^2} +\frac{1}{m^2-1} \right) - \frac{1}{9!} \left( \frac{n^8}{m^2-n^2} +\frac{m^8}{m^2-1} \right)\right] x^4 &\\
 +\frac{m^2+1-m^2n^2-n^4}{7!} \,x^2 + \frac{n^2-1}{5!} & \, \geq \, 0.  
\end{align*}
We rescale with $y=m^2x^2 \in [0, 5.78^2]$, substitute $n=\la m$ and divide by $m^2$ to see that the above is equivalent to
\be \label{cond3'}
\vp(y)+\psi(y)\geq 0,
\ee
where we have grouped all the terms that depend on $m$ in 
$$
\vp(y) = \left( \frac{1}{482800m^6}- \frac{1}{9!}\right)\frac{y^2}{m^2-1} +\frac{m^2+1}{7! m^4} \,y -\frac{1}{5! m^2} 
$$
and the remaining terms in
$$
\psi(y) =  \left( \frac{1}{482800}- \frac{\la^8-\la^2+1}{9!}\right)\frac{y^2}{1-\la^2} -\frac{\la^2(1+\la^2)}{7!} \,y +\frac{\la^2}{5!}. 
$$
We have
$$
\vp(y) \geq -\frac{y^2}{9!\, (m^2-1)} - \frac{1}{5!\,m^2} \geq - \frac{5.78^4}{9!\,(81^2-1)} - \frac{1}{5!\cdot 81^2} > -1.74 \cdot 10^{-6},
$$
hence it will suffice to show $\psi(y) \geq 1.74 \cdot 10^{-6}$. We first show its derivative
$$
\psi'(y) \, =  \, \frac{2}{9!\, (1-\la^2)}\Big[ (a-1+\la^2-\la^8) y -36 \la^2(1-\la^4) \Big]
$$ 
is negative (here, as before, $a= 9! /482800$). Clearly $\psi'(0) < 0$ and since $\psi'$ is linear we just need to check that $\psi'(5.78^2) \leq 0$. For this we write
$$
\psi'\big(5.78^2\big) \, =  \, \frac{2}{9!\, (1-\la^2)}\Big[ \big(5.78^2 - 36\big)\la^2 + u(\la^2) \Big],
$$
where $u(\nu) = (a-1)5.78^2+36\nu^3-5.78^2 \nu^4$. Note that $u(\nu) < 0$ as the leading coefficient is negative and it attains negative values at its two critical points $\nu = 0$ and $\nu = 27/5.78^2$. Since $5.78^2 < 36$ we conclude $\psi'(y) < 0$ for $y \in [0, 5.78^2]$.

Since $\psi$ decreases we have that $\psi(y) \geq \psi(5.78^2)$ and that our aim to prove \eqref{cond3'} has been reduced to showing that $\psi(5.78^2) \geq 1.74\cdot 10^{-6}$ or, equivalently, that 
$$
 (a-1+\nu-\nu^4) \frac{5.78^4}{9!} - \nu(1-\nu^2) \frac{5.78^2}{7!} +\nu(1-\nu) \frac{1}{5!} - 1.74\cdot 10^{-6} (1-\nu) >0, 
$$
for $\nu=\la^2\in[0.25, 0.82^2]$. We denote by $V(\nu)$ the left-hand side of this inequality and show that it is concave by computing
$$
V''(\nu) \, = \, -\frac{12\cdot 5.78^4}{9!} \nu^2 + \frac{6 \cdot 5.78^2}{7!} \nu - \frac{2}{5!},
$$
a parabola which always lies below zero. Therefore, we need only verify that the function $V$ is positive at the endpoints of our interval, which is true since 
$$
V(0.25) \approx 5.5947 \cdot 10^{-7} \qquad\text{and}\qquad V(0.82^2) \approx 6.857  \cdot 10^{-5}.
$$
The proof is now complete. 
\end{proof}

\section{In a neighborhood of $\pi$}\label{s_pi}

Here we follow the exact same strategy we employed in the last section, but this time for the function
$$
\widetilde{g}(x) = g(x + \pi) = \frac{n\sin(mx)+m\sin(nx)}{m\sin{x} - \sin(mx)}. 
$$
A single pair of functions $(s_1, s_2)$ will not suffice to cover the whole interval $[0, 5.78/m]$ in this case, and for this reason we separate the result in two different statements:

\begin{proposition}\label{prop_pi_1}
The function $\widetilde{g}$ satisfies $\widetilde{g}(x) \geq g(0)$ in the interval $[0, 2.8/m]$ for any $m \geq 3$.
\end{proposition}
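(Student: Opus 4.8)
The plan is to notice that on the short interval $[0, 2.8/m]$ the function $\widetilde g$ is plainly nonnegative, so that the inequality $\widetilde g(x) \ge g(0)$ becomes immediate once we recall that
$$
g(0) = -\frac{n(m^2-n^2)}{m^2-1}
$$
is negative (since $m > n \ge 2$). Unlike in a neighborhood of $0$, no careful sandwiching of $\widetilde g$ between two polynomials is needed on this particular sub-interval; the only real choice is making the cut-off small enough.

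First I would record the two sign observations. For $x \in [0, 2.8/m]$ one has $mx \in [0, 2.8]$ and, using $\la \le 0.82$, also $nx = \la m x \in [0, 2.8\la] \subseteq [0, 2.296]$. Since $2.8 < \pi$, both $mx$ and $nx$ lie in $[0,\pi]$, where the sine is nonnegative; hence the numerator $n\sin(mx) + m\sin(nx)$ of $\widetilde g$ is nonnegative on $[0, 2.8/m]$. Next I would check that the denominator $m\sin x - \sin(mx)$ is positive on $(0, 2.8/m]$. This follows either from the inequality $m\sin t - \sin(mt) \ge 0$ on $[0,\pi]$ already used in the proof of Proposition~\ref{prop_far}, or directly from $\frac{d}{dx}(m\sin x - \sin(mx)) = m(\cos x - \cos(mx)) > 0$ for $0 < x < mx < \pi$, which holds on $(0, 2.8/m]$ because $m \ge 3$ and $2.8 < \pi$; so the denominator increases from its value $0$ at the origin.

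Putting these together gives $\widetilde g(x) \ge 0 > g(0)$ for $x \in (0, 2.8/m]$, while at $x = 0$ one has $\widetilde g(0) = g(\pi) = +\infty \ge g(0)$. This would finish the argument.

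I do not expect a genuine obstacle here: this is by design the easy half of the analysis near $\pi$, and the positivity argument is more or less forced. The only thing requiring care is the bookkeeping that fixes the constant $2.8$ — it must be chosen, together with the standing hypothesis $\la \le 0.82$, precisely so that $mx$ and $nx$ both stay below $\pi$. Beyond $x = \pi/m$ the factor $\sin(mx)$ changes sign and this crude estimate collapses, which is exactly why the remaining interval $[2.8/m, 5.78/m]$ has to be treated in a separate proposition, there adapting the sandwiching functions $s_1, s_2$ of Section~\ref{s_0}.
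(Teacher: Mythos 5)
Your proof is correct, but it takes a notably different—and simpler—route than the paper's. The paper applies the same $s_1, s_2$-sandwiching framework as in Section~\ref{s_0} (with $s_1(x) = x - x^3/3!$ and $s_2(x) = x - x^3/3! + x^5/5!$), reducing the claim via conditions \eqref{cond1}--\eqref{cond2} to the explicit polynomial inequality
$$
2 - \frac{\lambda^2}{3}\,y - \frac{(1-\lambda^2)m^2}{5!\,(m^2-1)}\,y^2 \,\geq\, 0 \qquad \big(y \in [0, 2.8^2],\; m \geq 3\big),
$$
which it then verifies by monotonicity at the corner $(\lambda, y, m) = (0.82, 2.8^2, 3)$. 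You instead notice that on this particular interval the inequality is essentially trivial: for $x \in (0, 2.8/m]$ one has $0 < nx < mx \leq 2.8 < \pi$, so $\sin(mx) \geq 0$ and $\sin(nx) > 0$, making the numerator $n\sin(mx) + m\sin(nx)$ of $\widetilde{g}$ nonnegative; combined with the positivity of the denominator $m\sin x - \sin(mx)$ (the standard inequality on $[0,\pi]$, or your clean derivative argument using $\cos x > \cos(mx)$ for $0 < x < mx < \pi$), this yields $\widetilde{g}(x) \geq 0 > g(0)$ outright, with no polynomial approximation at all. Note that the paper's general framework does already allow for the case $\widetilde{g}(x) \geq 0$, but your observation is that on $[0, 2.8/m]$ one is \emph{always} in that case, so the polynomial bound $ns_1(mx) + ms_1(nx)$ (which in fact goes negative once $mx > \sqrt{6}$) never needs to be invoked. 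What the paper's longer route buys is uniformity with the machinery required for Proposition~\ref{prop_pi_2}, where $mx$ exceeds $\pi$ and $\sin(mx)$ changes sign — exactly the point at which, as you correctly remark, the crude sign argument collapses and the more careful sandwiching becomes unavoidable.
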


\begin{proposition}\label{prop_pi_2}
The function $\widetilde{g}$ satisfies $\widetilde{g}(x) \geq g(0)$ in the interval $[2.8/m, 5.78/m]$ for any $m \geq 81$ as long as $0.5 \leq \lambda \leq 0.8194$.
\end{proposition}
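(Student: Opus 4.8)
The plan is to follow the three-step template of the proof of Proposition~\ref{prop_0}, now for $\widetilde{g}$ on the interval $[2.8/m,\,5.78/m]$. On this interval one has $mx\in[2.8,5.78]$ and, since $0.5\le\lambda\le0.8194$, also $nx=\lambda\,mx\in[1.4,4.74]\subset[0,5.78]$, so a sandwich $s_1(t)\le\sin t\le s_2(t)$ on $[0,5.78]$ bounds each of the arguments $x$, $mx$, $nx$ occurring in $\widetilde{g}$. Checking the sandwich and the positivity $m\,s_1(x)-s_2(mx)>0$ on $[2.8/m,5.78/m]$ for $m\ge81$ is routine and proceeds exactly as for conditions~\eqref{cond1}--\eqref{cond2}: writing $y=mx$ and $m\,s_1(y/m)=y-y^3/(6m^2)+\cdots$ reduces the positivity to a one-variable estimate in $y$ with an $O(m^{-2})$ correction (and for the polynomials of Proposition~\ref{prop_0} it is already contained in that proof, since $[2.8/m,5.78/m]\subset[0,5.78/m]$). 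Combining $n\sin(mx)\ge n\,s_1(mx)$, $m\sin(nx)\ge m\,s_1(nx)$ with the positivity of the denominator gives the dichotomy: either $n\,s_1(mx)+m\,s_1(nx)\ge0$, whence $\widetilde{g}(x)\ge0>g(0)$ and we are done, or
$$
\widetilde{g}(x)\ \ge\ \frac{n\,s_1(mx)+m\,s_1(nx)}{m\,s_1(x)-s_2(mx)},
$$
and it remains to show this is $\ge g(0)$.

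Next I would reduce the surviving inequality. Clearing the positive denominator, substituting $g(0)=-n(m^2-n^2)/(m^2-1)$ and $n=\lambda m$, rescaling with $y=mx\in[2.8,5.78]$ and dividing out the appropriate power of $m$ turns it into a relation among the values of $\sin$ (or its $s_i$-surrogates) at $y$ and $\lambda y$, together with the small-argument term $m\,s_1(y/m)$. Collecting the genuinely $m$-dependent contributions---all of size $O(m^{-2})$ and bounded for $m\ge81$ using $y\le5.78$---separately from the rest, the problem reduces to an inequality of the form
$$
\Psi(y,\lambda)\ \ge\ \varepsilon_m,\qquad \Psi(y,\lambda)=\sin(\lambda y)+\lambda^{3}\sin y+\lambda(1-\lambda^{2})\,y,
$$
to hold for all $(y,\lambda)\in[2.8,5.78]\times[0.5,0.8194]$, where $\varepsilon_m\to0$ as $m\to\infty$. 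It is cleanest here to keep $\sin y$ and $\sin(\lambda y)$ exact and use only a short polynomial bound for the small argument $\sin x$; alternatively one may stay inside the literal $s_1,s_2$ framework at the cost of choosing $s_1,s_2$ accurate to well within $\varepsilon_m$ on the relevant portion of $[2.8,5.78]$---for instance by a piecewise low-degree choice---and absorbing the approximation error into $\varepsilon_m$. Numerically $\Psi$ is smallest near $(y,\lambda)\approx(5.0,\,0.8194)$, i.e.\ for $mx$ a little past $3\pi/2$, where $\sin(mx)<0$, with a limiting slack of order $10^{-3}$; the number $0.8194$ is precisely the threshold that keeps $\Psi$ above $\varepsilon_m$ there for $m\ge81$.

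The heart of the matter is therefore the two-variable inequality $\Psi(y,\lambda)\ge\varepsilon_m$ on a rectangle where $\Psi$ is monotone in neither variable and where the margin is tiny---in contrast with the analogous step of Proposition~\ref{prop_0}, which collapsed to a one-variable concavity check in $\nu=\lambda^2$. The plan is to partition $[0.5,0.8194]$ (and, if needed, $[2.8,5.78]$) into finitely many pieces, bounding $\Psi$ crudely from below on the sub-rectangles away from the critical region---where a comfortable margin is available---via $\sin\ge-1$ or tangent-line estimates, and treating the one or two sub-rectangles around $(y,\lambda)\approx(5.0,0.8194)$ by hand. On the critical piece I expect $\partial_\lambda\Psi<0$ (increasing $\lambda$ is what ultimately breaks the inequality), which there reduces matters to the one-variable inequality $\Psi(y,0.8194)\ge\varepsilon_m$ on a short interval about $y\approx5.0$; this can be finished by the elementary means used throughout the paper---examining $\Psi'(\,\cdot\,,0.8194)$ and $\Psi''(\,\cdot\,,0.8194)$, locating the minimum, and evaluating.

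The principal obstacle is the accuracy this endgame demands: since the slack is only of order $10^{-3}$, the bound on $\varepsilon_m$ for $m\ge81$, the polynomial bound on $\sin x$, and the sign and size estimates for $\Psi$ and its derivatives near $y\approx5.0$ must all be carried out with enough decimal places to leave something over. That, together with the bookkeeping needed to certify that the critical region is narrow and that crude bounds suffice outside it, is the bulk of the work, and I would relegate it to the appendix.
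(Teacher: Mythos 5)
Your high-level plan matches the paper's: after showing the denominator $m s_1(x)-s_2(mx)>0$, bound the numerator below by $n s_1(mx)+m s_1(nx)$, clear the denominator, substitute $n=\lambda m$, $t=mx$, observe the resulting inequality is monotone in $m$ (so $m=81$ is extremal), and then treat the remaining two--variable inequality on a compact rectangle by locating the near--tight region around $(t,\lambda)\approx(5,0.8194)$ and using partial monotonicity/concavity in $\lambda$ to push to the boundary. The limiting form $\Psi(t,\lambda)=\sin(\lambda t)+\lambda^3\sin t+\lambda(1-\lambda^2)t\ge 0$ that you isolate is indeed the $m\to\infty$ shadow of the paper's inequality \eqref{last_ineq}, and your identification of the critical corner and the tiny margin is correct.

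The genuine divergence is in the choice of surrogate for $\sin$. You prefer to keep $\sin(mx)$, $\sin(nx)$ exact (or to reuse the degree-$9$, $0$-centered polynomials of Proposition~\ref{prop_0}), which leaves the endgame a transcendental inequality in $(t,\lambda)$ whose verification would require numerical evaluation of $\sin$ and its derivatives at several points with enough digits to beat a margin of order $10^{-3}$ or less. The paper instead takes $s_1,s_2$ to be the order-4 and order-2 Taylor polynomials of $\sin$ \emph{centered at $3\pi/2$} (the mid-point of the dangerous range of $mx$), while bounding the small argument $\sin x$ by $x-x^3/3!$. This is the one idea not present in your sketch, and it is what makes the appendix tractable by elementary means: the resulting $F(\lambda,t)$ in \eqref{last_ineq} is a polynomial in $(\lambda,t)$, so the minimum-principle reduction (Lemmas~\ref{lem:ll} and~\ref{lem:l}), the boundary slices $F(0.5,\cdot)$ and $F(0.8194,\cdot)$, and the final squeeze between $\varphi$ and $\psi$ by six tangent lines in Lemma~\ref{lem:0.8194} all come down to sign checks on parabolas and cubics, verifiable on a hand calculator. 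Also note that the paper needs a lower bound $s_1$ on $\sin$ that is simultaneously sharp near $3\pi/2$ (for $\sin(mx)$) and not too bad near $\pi$ (for $\sin(nx)$ with $nx\approx 4.1$); the degree-$9$ polynomials of Proposition~\ref{prop_0}, being Maclaurin-type, would lose accuracy precisely where it is needed most, so your parenthetical fallback would likely not leave enough room. Your proposal is sound as a route, but the $3\pi/2$-expansion is the device that turns the delicate endgame you flag into a sequence of purely polynomial checks.
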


\begin{proof}[Proof of Proposition~\ref{prop_pi_1}]
For this region we choose
$$
s_1(x) = x - \frac{x^3}{3!} \qquad \text{and} \qquad s_2(x) = x - \frac{x^3}{3!} + \frac{x^5}{5!}.
$$
The inequalities \eqref{cond1} follow from integrating $\cos{x} \leq 1$ three and five times, respectively, from $0$ to $x$. Inequality \eqref{cond2} for the limited range $x \in [0, 2.8/m]$, after dividing by $m^3x^3$ and setting $y=m^2x^2$, reads
$$
\frac{1}{3!} -\frac{y}{5!} -\frac{1}{3!\, m^2}\geq 0 \qquad (y\in [0, 2.8^2]),
$$ 
which is clearly satisfied for $m \geq 3$. 

Finally, in view of \eqref{cond1} and \eqref{cond2} we have that either $\widetilde{g}(x)\geq 0$ or
$$
\widetilde{g}(x) \, \geq \, \frac{ns_1(mx)+ms_1(nx)}{ms_1(x)-s_2(mx)} \qquad \big(x \in [0, 2.8/m], \, m \geq 3\big).
$$
Hence, inequality \eqref{cond3} has to be replaced by
$$
\frac{ns_1(mx)+ms_1(nx)}{ms_1(x)-s_2(mx)} \, \geq \, g(0) \qquad \big(x \in [0, 2.8/m], \, m \geq 3\big).
$$
To prove it, we multiply by the denominator, divide by $mnx$, set $y=m^2x^2$ and substitute $n=\la m$ in order to obtain
$$
2 -\frac{\lambda^2}{3} \,y - \frac{(1-\lambda^2)m^2}{5! \,(m^2-1)}  \, y^2 \, \geq \, 0 \qquad \big(y \in [0, 2.8^2], \, m \geq 3\big).
$$ 
The left-hand side is clearly increasing in $m$ and decreasing in $y$, therefore it suffices to prove the inequality for $m=3$ and $y = 2.8^2$. The resulting quadratic polynomial in $\lambda$ is also decreasing, so we only have to check that the inequality is satisfied for $\lambda = 0.82$.
\end{proof}

\begin{proof}[Proof of Proposition~\ref{prop_pi_2}]
In this case we choose
\begin{align*}
s_1(x) &= -1 + \frac{\left(x-\frac{3\pi}{2}\right)^2}{2} - \frac{\left(x-\frac{3\pi}{2}\right)^4}{4!}, \\
s_2(x) &= -1 + \frac{\left(x-\frac{3\pi}{2}\right)^2}{2}.
\end{align*}
Integrating $\sin{x} \geq -1$ from $3\pi/2$ to $x$ two and four times, \eqref{cond1} follows for any $x > 0$.

For the sake of simplicity in inequality \eqref{cond2} we replace $s_1$ by the less involved function $x - x^3/3!$ (which was proven to lie below $\sin{x}$ in the previous proof). Hence we will show that
$$
m \left( x -\frac{x^3}{3!} \right) -s_2(mx) \, \geq \, 0 \qquad \big(x \in [2.8/m, 5.78/m], \, m \geq 5 \big).
$$
Setting $t=mx \in [2.8, 5.78]$ we see that this is equivalent to 
$$
1+t-\frac{\left(t-\frac{3\pi}{2}\right)^2}{2}-\frac{t^3}{6m^2} \, \geq \,  0.  
$$
We denote the left-hand side by $v(t)$ and compute $v''(t)=-1-t/m^2<0$. Since $v$ is concave in $t$ and increases in $m$ it suffices to check this inequality at the endpoints of the prescribed interval in $t$ and for $m=5$. Indeed, the inequality is true since
$$
v(2.8) \approx 1.825 \qquad \text{and} \qquad v(5.78) \approx 4.9228. 
$$ 

In view of \eqref{cond1} and the substitute of \eqref{cond2} we have that either $\widetilde{g}(x)\geq 0$ or
$$
\widetilde{g}(x) \, \geq \, \frac{ns_1(mx)+ms_1(nx)}{m\big(x-x^3/3!\big)-s_2(mx)} \qquad \big(x \in [2.8/m, 5.78/m], m \geq 5 \big). 
$$
Hence, in order to finish the proof we will show that, instead of \eqref{cond3}, the following inequality is true
$$ 
\frac{ns_1(mx)+ms_1(nx)}{m\big(x-x^3/3!\big)-s_2(mx)}  \, \geq \, g(0) \qquad \big(x \in [2.8/m, 5.78/m], m \geq 81 \big). 
$$ 
Multiplying by the denominator, dividing by $m$, setting $t=mx$ and $n = \la m$, we see that this is equivalent to  
\begin{align}
\lambda&\left(-1+\frac{\left(t-\frac{3\pi}{2}\right)^2}{2} - \frac{\left(t-\frac{3\pi}{2}\right)^4}{4!}\right) \nonumber\\
&+ \left(-1+\frac{\left(\lambda t-\frac{3\pi}{2}\right)^2}{2} - \frac{\left(\lambda t-\frac{3\pi}{2}\right)^4}{4!}\right) \nonumber\\
&+\lambda(1-\lambda^2)\left(1+\frac{1}{m^2-1}\right)\left(t-\frac{t^3}{6m^2}+1-\frac{\left(t-\frac{3\pi}{2}\right)^2}{2}\right) \geq 0 \label{last_ineq}
\end{align}
holding uniformly for $t \in [2.8, 5.78]$, $\la \in [0.5, 0.8194]$ and $m \geq 81$. Denote by $F(\la, t, m)$ the left-hand side. We claim that this function is increasing in $m$. To simplify the computations we differentiate with respect to the variable $u = 1/m^2$, noting that $1/(m^2-1) = -1 + 1/(1-u)$, to obtain
$$
F_u(\la, t, m) = \frac{\la (1-\la^2)}{(1-u)^2}\left(t-\frac{t^3}{6}+1-\frac{\left(t- \frac{3\pi}{2} \right)^2}{2} \right).
$$
Let $p(t)$ denote the polynomial in $t$ that appears between parenthesis in the last expression. Note that $p'(t) = 1+3\pi/2 -t -t^2/2$ is decreasing in $t$ and $p'(2.8) \approx -1.0076$. Therefore $p$ is also decreasing in $t$ in the interval $[2.8, 5.78]$ and hence it suffices to check that $p(2.8) \approx -1.6873$ is indeed negative. Our claim has been proved: $F(\la, t, m)$ is increasing in $m$ and therefore \eqref{last_ineq} will follow from proving $F(\la, t, 81) \geq 0$ uniformly for $t \in [2.8, 5.78]$, $\la \in [0.5, 0.8194]$. As this set is compact, this can be done with the aid of a computer, evaluating $F$ in a sufficiently dense grid. Alternatively, we include in the appendix a (fairly tedious) proof of this fact that can be verified using a hand-held calculator.
\end{proof}

\section{Appendix}
Let $F(\la, t)$ denote the left-hand side of \eqref{last_ineq} for $m = 81$. The proof of $F(\la, t) \geq 0$ for $t \in [2.8, 5.78]$, $\la \in [0.5, 0.8194]$ is divided in two steps: first we check that we only need to verify this inequality for $\la = 0.5$ and $\la = 0.8194$, and then we deal with these special cases separately. The following two lemmas constitute the first part:

\begin{lemma}\label{lem:ll}
The inequality $F_{\la \la}(\la, t) \leq 0$ is satisfied for $2.8 \leq t \leq 5$ and $0.5 \leq \la \leq 0.82$.
\end{lemma}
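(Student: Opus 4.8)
The plan is to work with the closed form of $F_{\lambda\lambda}$ and to reduce the inequality, via one monotonicity argument in $\lambda$ and one convexity argument in $t$, to the evaluation of a couple of explicit polynomials at the endpoints $t=2.8$ and $t=5$. Write \eqref{last_ineq} for $m=81$ as $F(\lambda,t)=\lambda\,\phi(t)+\phi(\lambda t)+\lambda(1-\lambda^2)\,C\,q(t)$, where $\phi(s)=-1+\tfrac12(s-\tfrac{3\pi}{2})^2-\tfrac1{24}(s-\tfrac{3\pi}{2})^4$, $C=1+\tfrac1{81^2-1}$ and $q(t)=t+1-\tfrac12(t-\tfrac{3\pi}{2})^2-\tfrac{t^3}{6\cdot 81^2}$ is the polynomial called $v$ in the proof of Proposition~\ref{prop_pi_2} (so that $q(t)\ge 0$ for $t\in[2.8,5.78]$, while trivially $q(t)\le t+1$). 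Differentiating twice in $\lambda$ kills the first term and, since $\phi''(s)=1-\tfrac12(s-\tfrac{3\pi}{2})^2$, produces
$$
F_{\lambda\lambda}(\lambda,t)=t^2\Big(1-\tfrac12\big(\lambda t-\tfrac{3\pi}{2}\big)^2\Big)-6\lambda\,C\,q(t).
$$

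First I would show that, for each fixed $t\in[2.8,5]$, the map $\lambda\mapsto F_{\lambda\lambda}(\lambda,t)$ is nondecreasing on $[0.5,0.82]$. Indeed $F_{\lambda\lambda\lambda}(\lambda,t)=t^3\big(\tfrac{3\pi}{2}-\lambda t\big)-6Cq(t)$ is affine and decreasing in $\lambda$, so it is enough to check $F_{\lambda\lambda\lambda}(0.82,t)\ge 0$, that is, $\big(\tfrac{3\pi}{2}-0.82\,t\big)t^3\ge 6Cq(t)$ on $[2.8,5]$. Using $q(t)\le t+1$ and $6C<6.001$, this reduces to $\tfrac{3\pi}{2}-0.82\,t\ge 6.001\,(t+1)/t^3$; here the left-hand side is affine and $(t+1)/t^3=t^{-2}+t^{-3}$ is convex on $(0,\infty)$, so the difference is concave and attains its minimum over $[2.8,5]$ at an endpoint, where one checks it is positive (the difference is $\approx 1.38$ at $t=2.8$ and $\approx 0.32$ at $t=5$). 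Consequently $F_{\lambda\lambda}(\lambda,t)\le F_{\lambda\lambda}(0.82,t)$ throughout the region in question.

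It then remains to bound $G(t):=F_{\lambda\lambda}(0.82,t)$ on $[2.8,5]$. Discarding the nonpositive term $-\tfrac12 t^2(0.82\,t-\tfrac{3\pi}{2})^2$ and using $C\ge 1$ together with $q(t)\ge 0$ gives $G(t)\le t^2-4.92\,q(t)$. Since $t^3/(6\cdot 81^2)\le 5^3/(6\cdot 81^2)<0.0032$ on $[2.8,5]$, the polynomial $q(t)$ is bounded below by the fixed downward parabola $-\tfrac12 t^2+(1+\tfrac{3\pi}{2})t+1-\tfrac98\pi^2-0.0032$, so that $G(t)\le P(t)$ for an explicit upward parabola $P$ with leading coefficient $3.46$. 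A direct computation gives $P(2.8)\approx -1.84<0$ and $P(5)\approx -4.30<0$, and since $P$ is convex this forces $P(t)<0$ on all of $[2.8,5]$. Chaining the estimates, $F_{\lambda\lambda}(\lambda,t)\le G(t)\le P(t)<0$ for $2.8\le t\le 5$ and $0.5\le\lambda\le 0.82$, which is the assertion of the lemma.

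I expect the only genuine subtlety to be the monotonicity step. The naive estimate — bounding the two summands of $F_{\lambda\lambda}$ separately by their worst cases over $\lambda$ — fails, because the first summand is largest when $\lambda$ is largest, while the (negative) second summand is smallest in absolute value when $\lambda$ is smallest; inserting $\lambda=0.82$ in the first and $\lambda=0.5$ in the second leaves a small positive remainder near $t=5$. Proving $F_{\lambda\lambda\lambda}\ge 0$ is precisely what lets us use $\lambda=0.82$ consistently in both terms, after which the remaining margin is comfortable and the rest is routine.
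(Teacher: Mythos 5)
Your proposal is correct and follows essentially the same strategy as the paper: establish $F_{\lambda\lambda\lambda}\ge 0$ on the region to deduce $F_{\lambda\lambda}(\lambda,t)\le F_{\lambda\lambda}(0.82,t)$, then bound the latter by a convex parabola and check the endpoints $t=2.8$ and $t=5$. The only (minor) deviation is in how $F_{\lambda\lambda\lambda}\ge 0$ is verified — you divide by $t^3$ and invoke convexity of $t^{-2}+t^{-3}$, whereas the paper lower-bounds by a quartic $p(t)$, locates its unique interior critical point, and checks $p$ at the endpoints; both are valid and yield the same conclusion.
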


\begin{lemma}\label{lem:l}
The inequality $F_{\la}(\la, t) \leq 0$ is satisfied for $5 \leq t \leq 5.78$ and $0.5 \leq \la \leq 0.82$.
\end{lemma}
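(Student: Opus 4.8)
The plan is to differentiate the left-hand side of \eqref{last_ineq} at $m=81$ with respect to $\la$ and then estimate the result term by term. Writing $c = 1 + \frac{1}{6560} = \frac{6561}{6560}$, $p(t) = t - \frac{t^3}{39366} + 1 - \frac{(t-3\pi/2)^2}{2}$, and letting $A(x) = -1 + \frac{(x-3\pi/2)^2}{2} - \frac{(x-3\pi/2)^4}{4!}$ denote the degree-four Taylor polynomial of $\sin$ about $3\pi/2$ (so that $A'(x) = (x-3\pi/2) - \frac{(x-3\pi/2)^3}{6}$), a short computation gives
$$
F_\la(\la,t) \; = \; A(t) \; + \; t\,A'(\la t) \; + \; (1-3\la^2)\,c\,p(t).
$$
I would then split the argument into the two cases $\la \ge 1/\sqrt3$ and $\la \le 1/\sqrt3$, determined by the sign of $1-3\la^2$.

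Two elementary observations will be used throughout $[5,5.78]$. Writing $w = t-3\pi/2 \in (0,\sqrt6)$, one has $A'(t) = w(1-w^2/6) > 0$, so $A$ is increasing there and $A(t) \le A(5.78) < -0.48$; moreover $0 < p(t) \le t+1 \le 6.78$, since the two subtracted terms in $p$ are nonnegative. In the case $\la \ge 1/\sqrt3$ the third term of $F_\la$ is $\le 0$ (as $1-3\la^2 \le 0$ and $p(t) > 0$); also $\la t$ lies in $(3\pi/2 - \sqrt6,\, 3\pi/2 + 0.03)$, so $u := \la t - 3\pi/2$ satisfies $u^2 < 6$, whence $A'(\la t) = u(1-u^2/6)$ is $\le 0$ when $u \le 0$ and is $\le u < 0.03$ when $u > 0$; in either case $t\,A'(\la t) < 0.18$. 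Therefore $F_\la(\la,t) < -0.48 + 0.18 < 0$ in this case.

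In the case $\la \le 1/\sqrt3$ one has $0 \le 1-3\la^2 \le \frac14$ (using $\la \ge \frac12$), so the third term is at most $\frac14 c\,p(t) < 1.70$. For the middle term, $\la t$ now lies in $[2.5,\, 5.78/\sqrt3] \subset (3\pi/2 - \sqrt6,\, 3\pi/2)$, so $u = \la t - 3\pi/2$ is negative with $u^2 < 6$ and hence $A'(\la t) = u(1-u^2/6) < 0$; since $A''(u) = 1 - u^2/2$ vanishes only at $u = -\sqrt2$ in this range, $A'$ attains its maximum on the relevant interval at one of the endpoints, and one checks directly that $A'(2.5 - 3\pi/2) < -0.4$ and $A'(5.78/\sqrt3 - 3\pi/2) < -0.4$. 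Because $t \ge 5$, this forces $t\,A'(\la t) \le -2$, and therefore $F_\la(\la,t) < -0.48 - 2 + 1.70 < 0$. As the two cases together exhaust $\la \in [0.5,0.82]$, this would finish the proof.

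The one step requiring genuine care is the estimate $A'(\la t) < -0.4$ in the second case, which hinges on confining $\la t$ to the narrow window $[2.5,\, 5.78/\sqrt3]$: the lower endpoint $2.5$—where both $\la \ge \frac12$ and, crucially, the hypothesis $t \ge 5$ enter—keeps $u$ above $-\sqrt6$ so that $A'(\la t)$ remains strictly negative, while the upper endpoint keeps $u$ below $-1.375$ so that $|A'(\la t)|$ stays bounded away from $0$; the non-monotonicity of $A'$ there is harmless because the relevant extremum is an endpoint maximum. It is worth pointing out that, unlike on the range treated in Lemma~\ref{lem:ll}, the second derivative $F_{\la\la}$ is not everywhere nonpositive here—it is positive for $t$ close to $5.78$ and $\la$ close to $0.82$—which is exactly why this strip is handled through the sign of $F_\la$ rather than via concavity in $\la$.
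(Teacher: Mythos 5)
Your proof is correct and follows essentially the same strategy as the paper: differentiate $F$ in $\la$ to get the decomposition $F_\la = A(t) + tA'(\la t) + (1-3\la^2)\,c\,p(t)$ (the paper's $f+g+h$), then split into the cases $\la\gtrless 1/\sqrt3$ according to the sign of the last summand. The main difference lies in how the pieces are estimated: you exploit the sharper uniform bound $A(t)<-0.48$ on $[5,5.78]$ (the paper only uses $A(t)\leq 0$), which lets you close the case $\la\geq 1/\sqrt3$ directly with $tA'(\la t)<0.18$; the paper instead multiplies by $\la$, substitutes $u=\la t-3\pi/2$, and argues via the sign of $\psi(u)=(u+3\pi/2)(u-u^3/3!)$, including a sub-case where $\psi(u)>0$ is shown to force $\la>0.815$. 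Your route is a bit more streamlined and avoids that detour. One small omission: you assert $p(t)>0$ without justification; this is easy to supply, e.g.\ from $t+1\geq 6$, $t^3/39366<0.005$ and $(t-3\pi/2)^2/2\leq 1.068^2/2<0.57$ on $[5,5.78]$, or from the concavity of $p$ together with $p(5),p(5.78)>0$. The closing remark that $F_{\la\la}$ changes sign near $(\la,t)=(0.82,5.78)$ is accurate and nicely explains why the concavity argument of Lemma~\ref{lem:ll} cannot be extended to this strip.
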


Indeed, by the minimum principle, $F(\la, t) \geq \min\{ F(0.5, t), F(0.8194, t)\}$ for $2.8 \leq t \leq 5$, while $F(\la, t) \geq F(0.8194, t)$ for $5 \leq t \leq 5.78$. Now the second step is completed in view of the following two lemmas.

\begin{lemma}\label{lem:0.5}
We have $F(0.5, t) \geq 0$ for $2.8 \leq t \leq 5$.
\end{lemma}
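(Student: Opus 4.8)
The plan is to set $P(t)=F(1/2,t)$ and to exploit that, with $\la=1/2$ and $m=81$ now fixed, $P$ is an ordinary polynomial in $t$ of degree four with negative leading coefficient (equal to $-\frac{1}{48}-\frac{1}{384}=-\frac{3}{128}$, arising from the two quartic terms in \eqref{last_ineq}). Writing $w=t-3\pi/2$ and $v=t/2-3\pi/2$ and substituting $\la=1/2$, $m=81$ into \eqref{last_ineq} gives
$$
P(t)=\frac{1}{2}\Big(-1+\frac{w^2}{2}-\frac{w^4}{24}\Big)+\Big(-1+\frac{v^2}{2}-\frac{v^4}{24}\Big)+\frac{3}{8}\Big(1+\frac{1}{6560}\Big)\Big(t+1-\frac{t^3}{39366}-\frac{w^2}{2}\Big),
$$
where $6560=81^2-1$ and $39366=6\cdot 81^2$. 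The first step is to show that $P$ is concave on $[2.8,5]$. Differentiating twice,
$$
P''(t)=\frac{3}{8}\Big(1-\frac{1}{6560}\Big)-\frac{w^2}{4}-\frac{v^2}{8}-\frac{3\left(1+1/6560\right)}{8\cdot 6561}\,t,
$$
and since $-w^2/4\le 0$ and, for $t>0$, the last term is nonpositive, we get $P''(t)\le\frac{3}{8}-\frac{v^2}{8}$ for $t\ge 0$. On $[2.8,5]$ one has $|v|=3\pi/2-t/2\ge 3\pi/2-5/2$, hence $P''(t)\le\frac{3}{8}-\frac{1}{8}\big(3\pi/2-5/2\big)^2<0$ on the whole interval.

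Once concavity is in hand, Lemma~\ref{lem:0.5} reduces to the two endpoint inequalities $P(2.8)\ge 0$ and $P(5)\ge 0$, since a concave function on $[2.8,5]$ attains its minimum at an endpoint. These are routine numerical checks --- the only constants involved being $3\pi/2$, $81^2-1$ and $6\cdot 81^2$ --- and one finds $P(2.8)\approx 0.345$ and $P(5)\approx 2.20$, both positive, which completes the argument. Together with Lemmas~\ref{lem:ll} and~\ref{lem:l}, which restrict attention to the boundary values $\la\in\{0.5,\,0.8194\}$, this disposes of the case $\la=0.5$.

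I expect the only genuine difficulty to be clerical: the expansions of $P$ and of $P''$ must be carried out without slips, since the factors of $\pi$ entering through $3\pi/2$ and the large integers $81^2-1$ and $6\cdot 81^2$ make the bookkeeping delicate. Conceptually nothing is at stake, as the concavity estimate has a comfortable margin --- the single term $-v^2/8$ already outweighs the constant $\frac{3}{8}$ on $[2.8,5]$ --- so no subdivision of the interval is needed; were $P''$ to approach zero somewhere, one would instead split $[2.8,5]$ at a zero of $P''$ and argue on the resulting pieces, but that contingency does not arise here.
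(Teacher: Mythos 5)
Your proof is correct and takes essentially the same route as the paper's: establish concavity of $F(0.5,\cdot)$ on $[2.8,5]$ by bounding the second derivative, then check the two endpoint values $F(0.5,2.8)\approx0.345$ and $F(0.5,5)\approx2.20$. The only cosmetic difference is that you discard the additional negative term $-w^2/4$ and verify $P''<0$ only on $[2.8,5]$, whereas the paper keeps both quadratic terms and observes that the resulting parabola $\tfrac34-\tfrac{w^2}{2}-\tfrac{(t-3\pi)^2}{16}$ is negative for all $t\in\mathbb{R}$.
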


\begin{lemma}\label{lem:0.8194}
We have $F(0.8194, t) \geq 0$ for $2.8 \leq t \leq 5.78$.
\end{lemma}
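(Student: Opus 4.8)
The plan is to regard $F(0.8194,t)$, with $m=81$ fixed, as the explicit polynomial of degree four in $t$ that it is — abbreviate it $P(t)$ — and to prove $P(t)\geq 0$ on $[2.8,5.78]$ by splitting the interval according to the sign of $P''$. A direct computation shows that $P''$ is a downward‑opening parabola in $t$ (its leading coefficient is $-\tfrac1{2}(\la+\la^4)<0$), so $\{P''\leq 0\}$ is the complement of an open interval. Evaluating $P''$ at $t=2.8,\ 3.7,\ 3.85$ and $5.78$ one finds $P''<0$ at the first two points and $P''>0$ at the last two; hence $P''$ has a single zero $r_1$ in $[2.8,5.78]$, with $3.7<r_1<3.85$, and $P$ is concave on $[2.8,r_1]$ and convex on $[r_1,5.78]$.

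Away from a small window around $t=5$ the sign of $P$ is then governed by monotonicity. I would check $P'(3.7)<0$, $P'(5)<0$ and $P'(5.1)>0$. Since $P'$ is non‑increasing on $[3.7,r_1]$ and non‑decreasing on $[r_1,5.78]$, the first two inequalities give $P'<0$ throughout $[3.7,5]$, and the last gives $P'>0$ throughout $[5.1,5.78]$. Consequently $P$ is decreasing on $[3.7,5]$ and increasing on $[5.1,5.78]$, so on $[2.8,3.7]$ concavity yields $P(t)\geq\min\{P(2.8),P(3.7)\}$, on $[3.7,5]$ we get $P(t)\geq P(5)$, and on $[5.1,5.78]$ we get $P(t)\geq P(5.1)$. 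The numbers $P(2.8)\approx 1.25$, $P(3.7)\approx 0.77$, $P(5)\approx 1.3\cdot 10^{-3}$ and $P(5.1)\approx 1.8\cdot 10^{-3}$ are all positive, so the whole complement of $[5,5.1]$ is dealt with.

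The window $t\in[5,5.1]$ is the crux: $P$ attains its global minimum over $[2.8,5.78]$ at an interior point near $t\approx 5.045$, and that minimum value is only about $2\cdot 10^{-4}$, so this part must be carried out with care. Because $P$ is a quartic, its fourth‑order Taylor expansion at $t=5$ is exact: $P(t)=q(t-5)+r(t-5)$, where $q(s)=P(5)+P'(5)\,s+\tfrac1{2}P''(5)\,s^2$ and $r(s)=\tfrac1{6}P'''(5)\,s^3+\tfrac1{24}P^{(4)}s^4$ with $P^{(4)}=-\la-\la^4$ a constant. One verifies that $P''(5)>0$, so $q$ is convex with global minimum $P(5)-P'(5)^2/\bigl(2P''(5)\bigr)$, which an explicit calculation shows to be a (small) positive number, and that $P'''(5)>0$, so $r(s)=s^3\bigl(\tfrac1{6}P'''(5)+\tfrac1{24}P^{(4)}s\bigr)\geq 0$ for $0\leq s\leq 0.1$. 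Hence $P(t)\geq q(t-5)\geq P(5)-P'(5)^2/\bigl(2P''(5)\bigr)>0$ on $[5,5.1]$, which completes the proof.

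The one genuinely delicate point is this last step: $\la=0.8194$ is essentially the largest value for which the method of Proposition~\ref{prop_pi_2} goes through, so $P(5)-P'(5)^2/\bigl(2P''(5)\bigr)$ is small and positive and must be estimated to several correct decimal digits; every other evaluation ($P$, $P'$ and $P''$ at the handful of points above, together with $P'''(5)$ and $P^{(4)}$) has a comfortable margin and is routine with a hand‑held calculator.
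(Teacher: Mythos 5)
Your argument is correct, and it takes a genuinely different route from the paper's. The paper writes $F(0.8194,t)=\varphi(t)-\psi(t)$, separating the positive terms (a quadratic $\varphi$) from the negative ones (a function $\psi$ with $\psi''\geq 0$), and then sandwiches six lines $L_i$ between them on a partition of $[2.8,5.78]$, checking $\varphi\geq L_i$ by a negative discriminant and $L_i\geq\psi$ at endpoints via convexity. You instead work with $P(t):=F(0.8194,t)$ directly as the explicit quartic it is: since the leading coefficient $-\tfrac{1}{24}(\la+\la^4)$ is negative, $P''$ is a downward parabola, and a handful of sign checks locate its unique zero $r_1$ in the interval; concavity on $[2.8,r_1]$ plus the signs of $P'$ at $3.7$, $5$, $5.1$ reduce everything to the window $[5,5.1]$, which you dispatch with the exact fourth-order Taylor expansion at $t=5$. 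What each approach buys: the paper's sandwich argument is more mechanical and avoids locating an inflection point, but requires inventing six ad hoc lines and pushing several discriminants and endpoint margins down to the $10^{-5}$--$10^{-6}$ range; your argument needs only a short list of derivative evaluations, and its tightest margin, $P(5)-P'(5)^2/\bigl(2P''(5)\bigr)\approx 2.5\cdot 10^{-4}$, is roughly an order of magnitude more forgiving. Both rightly identify the bottleneck near $t\approx 5.04$, where $P$ bottoms out at about $2\cdot 10^{-4}$; your claim that $r(s)=\tfrac16 P'''(5)s^3+\tfrac1{24}P^{(4)}s^4\geq 0$ for $s\in[0,0.1]$ also checks out, since $P'''(5)\approx 0.10>0$ comfortably dominates the $|P^{(4)}|s/4\leq 0.03$ correction.
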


\begin{proof}[Proof of Lemma~\ref{lem:l}]
We have the identity $F_\la = f + g + h$ where
\begin{align}
f(\la, t) &= -1+\frac{\left(t-\frac{3\pi}{2}\right)^2}{2} - \frac{\left(t-\frac{3\pi}{2}\right)^4}{4!}, \nonumber\\
g(\la, t) &= t\left(\lambda t-\frac{3\pi}{2}\right) - t\frac{\left(\lambda t-\frac{3\pi}{2}\right)^3}{3!}, \nonumber\\
h(\la, t) &= (1-3\lambda^2)\left(1+\frac{1}{6560}\right)\left(t-\frac{t^3}{39366}+1-\frac{\left(t-\frac{3\pi}{2}\right)^2}{2}\right). \nonumber
\end{align}
We claim that $f(\la, t) \leq 0$. Since $|t-3\pi/2| \leq 1.1$ for $t \in [5, 5.78]$ it suffices to check that the even polynomial $p(x) = -1 + x^2/2 - x^4/4!$ attains negative values in the interval $[-1.1, 1.1]$. As $p''(x) = 1-x^2/2$ is positive in this interval, it suffices to check $p(1.1) \approx -0.456$ is negative. Our claim is proved.

We focus now our attention on $h$. The parabola $p(t) = t + 1 - (t-3\pi/2)^2/2$ attains it maximum at $t = 1+3\pi/2$, and hence $p(t) \leq 3(1+\pi)/2$. Hence for $\la \leq 1/\sqrt{3}$ we have
$$
h(\la, t) \leq (1-3\la^2) \left(1+\frac{1}{6560}\right) \cdot \frac{3(1+\pi)}{2} < 6.22(1-3\la^2).
$$
On the other hand evaluation of $p$ at the endpoints of the interval $[5, 5.78]$ shows that it is positive. In fact,
$$
p(5) \approx 5.9586 \qquad \text{and} \qquad p(5.78) \approx 6.2101,
$$
and hence $p(t)  > 5.9585$. Since $t^3/39366 < 0.0049$ we have $p(t) - t^3/39366 > 5.95$. Therefore for $\la \geq 1/\sqrt{3}$ we have
$$
h(\la, t) < (1-3\la^2) \left(1+\frac{1}{6560}\right) \cdot 5.95 < 5.95(1-3\la^2).
$$
We have therefore shown that $h(\la, t) < \varphi(\la)$ where $\varphi$ is the piecewise defined function
$$
\varphi(\la) = \begin{cases}
6.22(1-3\la^2) & \text{if } \la \leq 1/\sqrt{3} \\
5.95(1-3\la^2) & \text{if } \la \geq 1/\sqrt{3}.
\end{cases}
$$

Gathering the previous inequalities and performing the change of variables $u = \la t - 3\pi/2$ in the expression defining $g$ we have
$$
\la F_\la(\la, t) < \psi(u) + \la \varphi(\la) \qquad \text{where} \quad \psi(u) = \left(u + \frac{3\pi}{2}\right)\left(u - \frac{u^3}{3!}\right).
$$
If $\psi(u) \leq 0$ and $\la \geq 1/\sqrt{3}$ both terms are negative and there is nothing to prove. Assume therefore that either $\psi(u) > 0$ or $\la < 1/\sqrt{3}$.

Note that
$$
 -2.213 < 0.5 \cdot 5 - \frac{3\pi}{2} \leq u \leq 0.82 \cdot 5.78 - \frac{3\pi}{2} < 0.028. 
$$
We claim $\psi(u) \leq 0$ for $u \in [-2.213, 0]$, and $\psi(u) \leq \psi(0.028) \approx 0.1327$ for $u \in [0, 0.028]$. Both claims follow from the fact that $\psi''(u) = 2 - 3\pi u/2 -2u^2$ is positive in both intervals, $\psi(0) = 0$ and $\psi(-2.213) \approx -1.0165$ is negative. Hence the assumption $\psi(u) > 0$ implies $u > 0$ or, equivalently, $\la > 3\pi/(2t)$. Therefore, $\la>0.815$ if $\psi(u) > 0$. Since the function $\la (1-3\la^2)$ is decreasing in $\la$, we conclude
$$
\la F_\la(\la, t) < 0.133 + 0.815 \cdot 5.95(1-3 \cdot 0.815^2) \approx -4.6807.
$$
The case $\psi(u) > 0$ is therefore covered, and we may assume $\la < 1/\sqrt{3}$. In this case we update the upper bound on $u$ to
$$
u \leq \frac{5.78}{\sqrt{3}} - \frac{3\pi}{2} < -1.375.
$$
Since $\psi(-1.375) \approx -3.1429 < \psi(-2.213)$, the convexity of this function implies $\psi(u) < -1.016$ for $u \in [-2.213, -1.375]$, and we obtain the bound
\[
\la F_\la(\la, t) < -1.016 + 0.5 \cdot 6.22(1-3 \cdot 0.5^2) = -0.2385.\qedhere
\]
\end{proof}

\begin{proof}[Proof of Lemma~\ref{lem:ll}]
We have the identities
\begin{align*}
F_{\la \la}(\la, t) = t^2&\left(1 - \frac{\left(\lambda t-\frac{3\pi}{2}\right)^2}{2}\right) \\
&\phantom{aa}- 6 \la\left(1+\frac{1}{6560}\right)\left(t-\frac{t^3}{39366}+1-\frac{\left(t-\frac{3\pi}{2}\right)^2}{2}\right),
\end{align*}
and
$$
F_{\la \la \la}(\la, t) = - t^3\left(\lambda t-\frac{3\pi}{2}\right) -6 \left(1+\frac{1}{6560}\right)\left(t-\frac{t^3}{39366}+1-\frac{\left(t-\frac{3\pi}{2}\right)^2}{2}\right).
$$
We claim $F_{\la \la \la}(\la, t) \geq 0$ for $t \in [2.8, 5]$ and $\la \in [0.5, 0.82]$. Indeed,
$$
F_{\la \la \la}(\la, t) > - t^3\left(0.82t-\frac{3\pi}{2}\right) - 6 \left(1+\frac{1}{6560}\right) \left(5+1\right)
$$
and setting $p(t)$ to be the right-hand side of this inequality we find that $p'(t) = t^2(9\pi/2 - 3.28t)$ has a single zero $a = 9\pi/6.56 \approx 4.3101$ in the interval $[2.8, 5]$ where it changes sign and therefore the function $p$ must be increasing in $[2.8, a]$ and decreasing in $[a, 5]$. Hence, it suffices to check that both $p(2.8)$ and $p(5)$ are positive. Indeed
$$
p(2.8) \approx 17.0391 \qquad \text{and} \qquad p(5) \approx 40.5431.
$$
Therefore $F_{\la \la}(\la, t) \leq F_{\la \la}(0.82, t)$ and
$$
F_{\la \la}(0.82, t) < t^2- 4.92\left(1+\frac{1}{6560}\right)\left(t+1-\frac{\left(t-\frac{3\pi}{2}\right)^2}{2}\right) + 0.016.
$$
The right-hand side is a convex parabola $q(t)$ and therefore to prove it is negative it suffices to evaluate it at the endpoints $t = 2.8$ and $t = 5$. We have
\[
q(2.8) \approx -1.8447 \qquad \text{and} \qquad q(5) \approx -4.305.\qedhere
\]
\end{proof}

\begin{proof}[Proof of Lemma~\ref{lem:0.5}]
We have the identity
\begin{align*}
\la^{-1} F_{tt}(\la, t) = 1- &\frac{\left(t- \frac{3\pi}{2} \right)^2}{2} + \la\left( 1- \frac{\left(\la t- \frac{3\pi}{2} \right)^2}{2}\right)\\
&-(1-\la^2)\left(1+\frac{1}{6560}\right) \left(1 + \frac{t}{6561}\right).
\end{align*}
Hence, neglecting the terms involving $1/6560$ and $t/6561$,
$$
2 F_{tt}(0.5, t) \leq \frac{3}{4} - \frac{\left(t- \frac{3\pi}{2} \right)^2}{2} - \frac{\left(t- 3\pi \right)^2}{16}. 
$$
The right-hand side is a parabola which always lies below zero. Therefore $F(0.5, t)$ is a concave function and we only need to check that both $F(0.5, 2.8)$ and $F(0.5, 5)$ are positive. Indeed,
\[
F(0.5, 2.8) \approx 0.3448 \qquad \text{and} \qquad F(0.5, 5) \approx 2.2033.\qedhere 
\]
\end{proof}

\begin{proof}[Proof of Lemma~\ref{lem:0.8194}]
This part is the most problematic because the minimum of $F(0.8194, t)$ in the specified interval is roughly $0.0002$. We proceed in the following way. First we separate the positive from the negative terms: $F(0.8194, t) = \varphi(t) - \psi(t)$ where
\begin{align*}
\varphi(t) &= 0.8194 \frac{\left(t - \frac{3\pi}{2} \right)^2}{2} + \frac{\left(0.8194t - \frac{3\pi}{2} \right)^2}{2} + d(t+1), \\
\psi(t) &= 0.8194\left(1 + \frac{\left(t - \frac{3\pi}{2} \right)^4}{4!}\right) + 1 + \frac{\left(0.8194t - \frac{3\pi}{2} \right)^4}{4!} + d\left(\frac{t^3}{39366} + \frac{\left(t - \frac{3\pi}{2} \right)^2}{2} \right) 
\end{align*}
and $d = 0.8194 \cdot (1-0.8194^2) \cdot (1+1/6560)$. We are going to split the interval $[2.8, 5.78]$ in smaller overlapping subintervals, and for each of these subintervals we will prove that for some line $L(t)$ we have $\varphi(t) \geq L(t) \geq \psi(t)$. Since $\psi''(t) \geq 0$ the second inequality needs only to be checked at the endpoints of the subinterval. To prove the first one we will show that the discriminant of the quadratic polynomial $\varphi(t) - L(t)$ is negative, and therefore these two curves never intersect. The six lines we consider are the following:
\begin{alignat*}{2}
&L_1(t) = -1.5t + 8.5, \qquad &&L_2(t) = -0.45t + 4.02, \\
&L_3(t) = 0.025t + 1.71372, \qquad &&L_4(t) = 0.077t + 1.4519, \\
&L_5(t) = 0.2t + 0.8256, \qquad &&L_6(t) = t - 3.5.
\end{alignat*}

Writing $\varphi(t) = a t^2 + bt + c$ and expanding,
$$
\varphi(t) = 0.74540818t^2 + (d-0.8194 \cdot 3\pi)t + (d + 1.8194 \cdot 9\pi^2/8),
$$
\emph{i.e.}, $a = 0.74540818$, $b \approx -7.45338058$ and $c \approx 20.47063551$, the error being smaller than $10^{-8}$. For each of the six lines considered $L_i(t) = B_i t + C_i$ we compute now the discriminant $\Delta_i = (b-B_i)^2 - 4a(c-C_i)$ of $\varphi-L_i$, within an error smaller than $10^{-6}$:
\begin{alignat*}{3}
&\Delta_1 \approx -0.249298,\qquad &&\Delta_2 \approx -0.002414,\qquad &&\Delta_3 \approx -0.000057,\\
&\Delta_4 \approx -0.000252,\qquad &&\Delta_5 \approx -0.000046,\qquad &&\Delta_6 \approx -0.011988.
\end{alignat*}

We consider as our partition of $[2.8, 5.78]$ the intervals $[t_i, t_{i+1}]$ where $t_1 = 2.8$, $t_7 = 5.78$ and $t_i$ for $2 \leq i \leq 6$ is the abscissa of the intersection between the lines $L_{i-1}$ and $L_i$. They have the following exact values:
\begin{alignat*}{3}
&t_1 = 2.8, \qquad &&t_2 = 64/15\approx 4.27, \qquad &&t_3 = 57657/11875 \approx 4.86,\\
&t_4 = 5.035,\qquad &&t_5 = 6263/1230 \approx 5.09, \qquad &&t_6 = 5.407, \qquad t_7 = 5.78.
\end{alignat*}
Note that we only need to check $L_i(t_i) \geq \psi(t_i)$ for $1 \leq i \leq 6$ and $L_6(t_7) \geq \psi(t_7)$. This can be verified from the following table, where we have included $L_7 = L_6$. The values are computed within an error of size $10^{-7}$.

\begin{center}
\begin{tabular}{r|r|r}
$i$  & $L_i(t_i)$ & $\psi(t_i)$ \\\hline
1    &           4.3 & 4.1931243 \\
2    &           2.1 & 1.9392134 \\
3    & 1.8351032 & 1.8350379 \\
4    & 1.839595 & 1.8395934 \\
5    & 1.843974 & 1.843946 \\
6    &      1.907 & 1.8936546 \\
7    &        2.28 & 2.0185385
\end{tabular}
\end{center}
We conclude $F(0.8194, t) \geq 0$ for $2.8 \leq t \leq 5.78$ and the proof is finished.
\end{proof}

\section*{Acknowledgments}
During the elaboration of this article the first author has been partially supported by grant MTM2015-65792-P by MINECO/FEDER-EU and by the Thematic Research Network MTM2015-69323-REDT, MINECO, Spain. The second author has been supported by the international PhD program ``la Caixa''-Severo Ochoa at the Instituto de Ciencias Matem\'aticas (CSIC-UAM-UC3M-UCM).

\end{document}